\newtheorem*{rep@theorem}{\rep@title}    
\newcommand{\newreptheorem}[2]{%
\newenvironment{rep#1}[1]{%
 \def\rep@title{#2 \ref{##1}}%
 \begin{rep@theorem}}%
 {\end{rep@theorem}}}
\newtheorem{thm}{Theorem}[section]
\newtheorem{fact}[thm]{Fact}
\newtheorem{lem}[thm]{Lemma}
\theoremstyle{definition}
\newtheorem{defn}[thm]{Definition}
\theoremstyle{remark}
\newtheorem{claim}{Claim}
\numberwithin{equation}{section}
\newcommand{\set}[1]{\left\{#1\right\}}
\newcommand{\M}{\mathcal{M}}
\newcommand{\N}{\mathcal{N}}
\newcommand{\s}{\mathcal{S}}
\newcommand{\Text}[1]{\text{\textnormal{#1}}}
\newcommand{\spn}{\Text{sp}}
\begin{document}
\setlength{\parskip}{0in}
\title[]{A matroidal generalization of results of Drisko and Chappell}
\author{Daniel Kotlar}
\address{Computer Science Department, Tel-Hai College, Upper Galilee 12210, Israel}
\email{dannykot@telhai.ac.il}
\author{Ran Ziv}
\address{Computer Science Department, Tel-Hai College, Upper Galilee 12210, Israel}
\email{ranziv@telhai.ac.il}
\setlength{\parskip}{0.075in}
\begin{abstract}

Let $\M$ and $\N$ be two matroids on the same ground set. We generalize results of Drisko and Chapell by showing that any $2n-1$ sets of size $n$ in $\M \cap \N$ have a rainbow set of size $n$ in $\M \cap \N$.

\end{abstract}
\maketitle
\section{Introduction}\label{sec1}

An $m\times n$ row-Latin rectangle is an $m\times n$ array in which each of the numbers $1,\ldots,n$ appear exactly once in each row. A \emph{partial transversal of size} $k$ in an $m\times n$ row-Latin rectangle $R$ is a set of $k$ entries of $R$ such that no two of them are in the same row or in the same column. If all the elements in a partial transversal are distinct we call it a \emph{partial rainbow transversal}.

Drisko \cite{Drisko98} proved the following elegant result:

\begin{thm}\label{thm1:1}
Any $(2n-1)\times n$ row-Latin rectangle has a partial rainbow transversal of size $n$.
\end{thm}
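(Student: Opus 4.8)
The plan is to prove the equivalent statement that any $2n-1$ perfect matchings $M_1,\dots,M_{2n-1}$ of the complete bipartite graph $K_{n,n}$, with one side indexed by the columns and the other by the values, have a rainbow perfect matching; reading row $i$ of the rectangle as the matching $M_i$, a partial rainbow transversal of size $n$ is exactly such a rainbow perfect matching. I would argue by contradiction: let $R$ be a rainbow matching of maximum size, say $|R|=r$, and suppose $r\le n-1$; relabel so that $R$ consists of edges $e_i\in M_i$ for $i=1,\dots,r$. Then there are $2n-1-r\ge n$ ``unused'' matchings $M_{r+1},\dots,M_{2n-1}$, each of which is perfect and hence has $n>r=|R|$ edges. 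The inequality $2n-1-r>r$ --- strictly more unused matchings than edges of $R$ --- is exactly what breaks down for $2n-2$ matchings, and Drisko's example of $n-1$ copies each of the two perfect matchings of a $2n$-cycle shows that this slack is genuinely needed.

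I would contradict the maximality of $R$ by producing a rainbow matching of size $r+1$. The object to find is a \emph{rainbow augmenting path}: an $R$-alternating path whose two end-edges are both ``new'', whose edges alternate between edges of $R$ ($t$ of them, for some $t\ge 0$) and new edges ($t+1$ of them), whose two endpoints are uncovered by $R$, and whose new edges lie in pairwise distinct unused matchings $M_{j_0},\dots,M_{j_t}$. Given such a path $P$, the symmetric difference $R\triangle P$ is a matching of size $r+1$ which is again rainbow: the surviving edges of $R$ keep their distinct colours in $\{1,\dots,r\}$, the $t+1$ new edges receive the distinct colours $j_0,\dots,j_t$, and since $t+1\le r+1\le n$ does not exceed the number of unused matchings these two colour sets are disjoint. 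This contradicts the choice of $R$.

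Thus everything reduces to showing that a maximum rainbow matching of size $r\le n-1$, together with $\ge n$ unused perfect matchings, admits a rainbow augmenting path. An easy preliminary: if some $R$-uncovered column is sent by some unused $M_j$ to an $R$-uncovered value, that single edge is already a rainbow augmenting path, a contradiction; hence every unused matching maps every uncovered column to a covered value, and dually for the values. From here I would grow alternating paths out of the uncovered columns, consuming a fresh unused matching at each step (traverse an edge of an unused matching, then the edge of $R$ meeting its far endpoint), and let $\mathcal{C}$ denote the set of those $e_i\in R$ whose column endpoint gets reached in this way. If one of these paths can be prolonged by a still-unused matching into an uncovered value we are done; otherwise every unused matching $M_j$ sends the uncovered columns together with the columns of $\mathcal{C}$ into the at most $|\mathcal{C}|\le r$ values carried by the edges of $\mathcal{C}$ --- and since the uncovered columns are nonempty and disjoint from the columns of $\mathcal{C}$, this is more columns than target values, contradicting that $M_j$ is a bijection.

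The main obstacle is the bookkeeping in this last step: once an alternating path has used a given unused matching, that matching is no longer free to extend it, so one must ensure that the ``closure'' $\mathcal{C}$ is reached by alternating paths that still leave an unused matching available at the relevant column. I expect to handle this by taking the alternating paths as short as possible (and, if needed, choosing $R$ itself extremally), exploiting precisely the slack $2n-1-r>r$ so that at every column encountered the number of still-unused matchings exceeds the number already spent on the path. This is the step that requires genuine care; the remainder of the argument is routine.
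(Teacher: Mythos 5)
Your reduction to rainbow matchings in $K_{n,n}$, the choice of a maximum rainbow matching $R$, and the observation that a rainbow augmenting path (new edges taken from pairwise distinct unused matchings) yields a rainbow matching of size $|R|+1$ are all correct, and this is also the skeleton of the paper's argument: Theorem~\ref{thm1:3} is proved by exactly this kind of alternating-trail augmentation relative to a maximum rainbow set, and Theorem~\ref{thm1:1} is its special case for two partition matroids. The difficulty is that the one step carrying all the content --- the \emph{existence} of such a path --- is not actually proved. Your closing counting argument is invalid as stated: to conclude that every unused matching $M_j$ maps the uncovered columns together with the columns of $\mathcal{C}$ into the values of $\mathcal{C}$, you must be able to extend, at each reached column, the alternating path by $M_j$; but $M_j$ may already have been consumed by the path reaching that column. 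This is precisely the obstruction you name in your final paragraph and then defer (``taking the alternating paths as short as possible\ldots this is the step that requires genuine care''). Note that if one ignores the availability constraint, augmentation is trivial and a single unused perfect matching already supplies an ordinary augmenting path --- but one that repeats a colour; the distinctness of colours along the path is the whole problem, it is where the hypothesis $2n-1$ enters, and Drisko's $(2n-2)$-row example shows it cannot be finessed. So the deferred bookkeeping is not a routine refinement; it is the theorem.

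What is missing is the inductive mechanism the paper supplies in Claim~1 of the proof of Theorem~\ref{thm1:3}: order the unused sets (matchings) and show, by induction on $k$, that the alternating trails whose new elements come only from the first $k$ unused sets reach at least $k$ distinct elements of $R$. The induction step is where the care goes: one reroutes by cutting an existing trail at the \emph{first} point where the relevant element of $R$ occurs, so that the new set is applied at a position where the trail has not yet disturbed the relevant structure (in the matroid setting this is handled by Fact~\ref{fact1} and Lemma~\ref{lem3}; in the bipartite case it amounts to truncating the path at its first meeting with the edge in question). Since $|R|\le n-1$, the first $n-1$ unused matchings already reach every edge of $R$, and an $n$-th unused matching --- available because $2n-1-(n-1)=n$ --- then closes off an augmenting path. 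Without an argument of this kind (or a precisely formulated and verified extremal choice replacing it), your proposal establishes the reduction and the augmentation step, but not the existence statement on which the proof rests.
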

In the same paper Drisko also gave an example of a $(2n-2)\times n$ row-Latin rectangle in which there is no partial rainbow transversal of size $n$.

Given sets $F_1, \ldots ,F_m$, a {\em partial rainbow function} is a partial choice function of the sets $F_i$.
A {\em partial rainbow set} is the range of a partial rainbow function. If the sets $F_1, \ldots ,F_m$ are matchings in a given bipartite graph, then a {\em partial rainbow matching} is a partial rainbow set which is also a matching. Drisko's result asserts that any $2n-1$ matchings of size $n$ in a bipartite graph with $2n$ vertices have a partial rainbow matching of size $n$.

In this paper we generalize Drisko's theorem to matroids.

We provide some definitions and notation concerning matroids. For a set $A$ and an element $x$ we use the notation $A+x$ for $A\cup\set{x}$ and $A-x$ for $A\setminus\set{x}$. A collection $\M$ of subsets of a \emph{ground set} $\s$ is a \emph{matroid} if it is hereditary and it satisfies an augmentation property: If $A, B\in\M$ and $|B|>|A|$, then there exists $x\in B\setminus A$ such that $A+x\in \M$. Sets in $\M$ are called {\em independent} and subsets of $\s$ not belonging to $\M$ are called {\em dependent}. 
An element $x\in \s$ is \emph{spanned} by $A$ if either $x\in A$ or $I+x\not\in \M$ for some independent set $I\subseteq A$.
The set of elements that are spanned in $\M$ by $A$ is denoted by $\spn_\M(A)$.
A \emph{circuit} is a minimal dependent set. For more background on matroid theory the reader is referred to in the books of Oxley~\cite{Oxley11} and Welsh~\cite{Welsh76}.

Chappell \cite{Chappell99} proved the following generalization of Drisko's theorem:

\begin{thm}\label{thm1:2}
Any $(2n-1)\times n$ array whose entries are taken from the ground set of a matroid $\M$, and whose rows are independent sets in $\M$, has a partial transversal of size $n$ which is an independent set of $\M$.
\end{thm}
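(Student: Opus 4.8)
The plan is to prove Theorem~\ref{thm1:2} by an extremal alternating-exchange argument that lifts Drisko's cyclic obstruction to an arbitrary matroid. It is convenient first to reformulate. Let $F_1,\dots,F_{2n-1}$ be the rows of the array, so each $F_i\in\M$ has size $n$ and meets every one of the $n$ columns in exactly one cell. A \emph{rainbow transversal} of size $n$ is a set $T$ meeting each column once whose $n$ elements lie in $n$ distinct rows; the goal is such a $T$ with $T\in\M$. Suppose for contradiction that no such $T$ exists, and take $T$ to be a rainbow transversal with $T\in\M$ of maximum size $r\le n-1$; among all of these, pick $T$ extremally --- for concreteness, so that the set $A$ of rows it meets is lexicographically least. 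Let $B$ be the set of columns $T$ meets, so that $\abs{A}=\abs{B}=r$, there are $2n-1-r\ge n$ rows outside $A$, and $n-r\ge 1$ columns outside $B$.

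Second, I would extract the local obstruction. Fix a row $j\notin A$ and a column $c\notin B$, and let $x$ be the entry in that cell. If $T+x\in\M$, then $T+x$ is a rainbow transversal of size $r+1$ using the fresh row $j$ and the fresh column $c$, contradicting maximality; hence $T+x\notin\M$, so $x\in\spn_\M(T)$ and there is a circuit $C\subseteq T+x$ with $x\in C$, and then every $y\in C-x\subseteq T$ satisfies $T-y+x\in\M$. Applying this to the $n-r$ cells of row $j$ lying in columns outside $B$ shows that those $n-r$ elements of $F_j$ all lie in the rank-$r$ flat $\spn_\M(T)$; since $F_j$ is independent of size $n$ this already forces $r\ge n/2$, and, more usefully, it equips every row outside $A$ with a rich family of single exchanges into $T$.

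Third, I would run a matroid-intersection style alternating search. Form an auxiliary digraph on the rows with an arc $j\to i$ (for $i\in A$) whenever some exchange as above, in a column outside $B$, can replace the element of $T$ lying in row $i$ by an element of row $j$, and iterate to obtain the rows reachable from the rows outside $A$. If some alternating chain of such exchanges yields an $\M$-independent rainbow transversal covering an additional column from outside $B$, we reach size $r+1$, a contradiction; hence no alternating chain ``escapes'', and the reachable part of $T$ is frozen into a closed configuration --- the matroid analogue of a subset of $\set{1,\dots,n}$ left invariant by a cyclic shift in Drisko's analysis. Because every row is an $\M$-independent transversal of all $n$ columns, such a closed configuration sorts the reachable rows into a bounded number of exchange types that must balance the columns exactly; exactly as Drisko's extremal $(2n-2)\times n$ rectangle is tight, this balancing cannot be achieved when $2n-1$ rows are available, and the resulting contradiction forces $r=n$.

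I expect the third step to be the main obstacle. In Drisko's setting each frozen entry $x$ equals a \emph{unique} entry of $T$, so the obstruction is literally a disjoint union of directed cycles on $\set{1,\dots,n}$ and the count is immediate; over a general matroid, $x$ lies on an entire circuit and may be exchanged with any of several elements of $T$, so the obstruction is a matroid/hypergraph structure rather than a permutation. The delicate point is to show that the exchange axiom still imposes enough rigidity for the ``$2n-1$ rows suffice'' count to survive; I anticipate this resting on the extremal choice of $T$ together with a careful double count of row--column incidences against ranks inside $\spn_\M(T)$, in place of Drisko's parity argument.
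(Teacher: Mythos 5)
Your setup is sound: taking a maximum-size independent rainbow transversal $T$, observing that for any row $j$ outside the rows of $T$ and any column $c$ outside the columns of $T$ the corresponding entry $x$ must satisfy $T+x\notin\M$, hence $x\in\spn_\M(T)$ with a circuit permitting single exchanges, is exactly the right local obstruction. But the proof stops where the theorem actually lives. Your third step --- ``no alternating chain escapes, so the reachable part of $T$ is frozen into a closed configuration, and this balancing cannot be achieved when $2n-1$ rows are available'' --- is an assertion, not an argument, and you say yourself that you only \emph{anticipate} it can be repaired by an extremal choice of $T$ plus a double count. That missing step is precisely the quantitative heart of Drisko/Chappell: one must show that the alternating structure grows fast enough that $2n-1$ rows force an augmentation, and in the matroid setting the loss of unique exchange partners (a circuit instead of a single matching entry) is exactly where a naive reachability or parity count breaks. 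Neither the lexicographic choice of the row set of $T$ nor the rank bound $r\ge n/2$ supplies this; as written, nothing in the sketch rules out a ``frozen'' configuration, so there is no contradiction and no proof.

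For comparison, the paper closes this gap (in the more general two-matroid Theorem~\ref{thm1:3}, of which Theorem~\ref{thm1:2} is the special case where the second matroid is the partition matroid of the columns) not by a global balancing count but by a monotone reachability argument: with $R$ a maximum rainbow independent set, $|R|<n$ leaves at least $n$ rows disjoint from $R$, and Claim~\ref{claim:1} shows by induction that the alternating trails (CATs) built from the first $k$ of these rows reach at least $k$ distinct elements of $R$ --- so $n-1$ rows already reach all of $R$, and an $n$-th row then yields an augmenting trail, contradicting maximality. The induction step is where the non-uniqueness of exchanges is tamed: properties (P$_\M$), (P$_\N$) record that each swap preserves the relevant spans, and Lemma~\ref{lem3} (circuit elimination applied repeatedly) guarantees that a circuit element $r''$ not yet reached survives as a circuit element after performing the earlier exchanges of the trail, i.e.\ $C_\M(R',a)=C_\M(R,a)$ when no element of $C_\M(R,a)$ was discarded along the trail, and similarly on the $\N$ side. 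If you want to complete your outline, you need an analogue of exactly this: a lemma controlling how fundamental circuits transform along an alternating chain, and an induction showing each new row outside $T$ contributes a new reachable element of $T$; the ``columns outside $B$ only'' digraph you describe is too coarse to support either.
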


In this paper we prove the following:

\begin{thm}\label{thm1:3}
Let $\M$ and $\N$ be two matroids on the same ground set $\s$. Any $2n-1$ sets of size $n$, each in $\M\cap\N$, have a partial rainbow set of size $n$ in $\M\cap\N$.
\end{thm}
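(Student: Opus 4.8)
The plan is to argue by contradiction, extending Chappell's exchange argument (Theorem~\ref{thm1:2}) from a single matroid to the intersection of two. Suppose the conclusion fails, and among all partial rainbow sets contained in $\M\cap\N$ pick one, $R$, of maximum size $r$; by our assumption $r\le n-1$. Relabelling, we may take $R=\{x_1,\dots,x_r\}$ with $x_i\in F_i$, so that the colors $r+1,\dots,2n-1$ are unused and there are at least $n$ of them. The first step is to record the local consequences of maximality. Fix an unused color $j$. Since $F_j\in\M\cap\N$ and $|F_j|=n>r=|R|$, the augmentation axiom applied in $\M$, and separately in $\N$, shows that $Y_j^{\M}:=\{y\in F_j\setminus R: R+y\in\M\}$ and $Y_j^{\N}:=\{y\in F_j\setminus R: R+y\in\N\}$ are both nonempty; and they are disjoint, since $y\in Y_j^{\M}\cap Y_j^{\N}$ would give $R+y\in\M\cap\N$, a rainbow set of size $r+1$. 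For $y\in Y_j^{\M}$ the set $R+y$ carries a unique $\N$-circuit $C_{\N}(R,y)\ni y$ with $C_\N(R,y)\setminus\{y\}\subseteq R$, and for every $z$ in it the set $R+y-z$ again lies in $\M\cap\N$ and is a maximum partial rainbow set, now realised with color $j$ replacing the color of $z$; symmetrically for $y'\in Y_j^{\N}$. These one-step exchanges are the matroidal substitute for the alternating edges in Drisko's bipartite setting.

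Next I would chain such exchanges into a genuine augmentation. Introduce the Edmonds exchange digraph $D_R$ on the ground set $\s$: an arc $x\to z$ whenever $x\in R$, $z\notin R$, $R-x+z\in\M$, an arc $z\to x$ whenever $R-x+z\in\N$, with ``source'' set $\{z\notin R:R+z\in\M\}$ and ``sink'' set $\{z\notin R:R+z\in\N\}$; by the standard matroid-intersection augmenting-path lemma a shortest source-to-sink path (or a single vertex in both sets) $P$ gives $R\,\triangle\,V(P)\in\M\cap\N$ of size $r+1$. Such a $P$ certainly exists, because $F_j\in\M\cap\N$ has more than $r$ elements, so $R$ is not a maximum common independent set in the ordinary sense. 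The difficulty is that $R\,\triangle\,V(P)$ must be \emph{rainbow}: the elements of $V(P)\setminus R$ must be assignable to pairwise distinct colors taken from the unused colors together with the colors freed along $P$. To force this I would (i) choose $R$ extremally -- for instance, among all maximum partial rainbow sets in $\M\cap\N$, one whose multiset of used colors is lexicographically least -- and (ii) work inside the subdigraph of $D_R$ whose arcs are the ones witnessed by the circuits $C_\N(R,y)$ and $C_\M(R,y')$ above: each of the $\ge n$ unused colors contributes a source together with its $\N$-circuit and a sink together with its $\M$-circuit, while $R$ has only $r\le n-1$ elements, so a counting/pigeonhole argument along these witnessed arcs should produce a source-to-sink walk on which the newly entering elements can be given distinct available colors. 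Taking a shortest such walk and feeding the entering elements into a Hall-type matching with the available colors then yields a rainbow common independent set of size $r+1$, contradicting the maximality of $R$.

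The crux, and the step I expect to be the real obstacle, is (ii): in the one-matroid case the non-selected coordinates carry no constraint, so a rainbow augmentation is almost immediate, whereas here every element entering along the augmenting path is constrained simultaneously by $\M$, by $\N$, and by color-availability, and one must show all three can be met on a single path. Making the extremal choice of $R$ interact correctly with \emph{both} exchange relations -- so that the path cannot be shortened, no cheaper recolouring intervenes, and the colors used along it stay pairwise distinct and disjoint from the surviving colors -- is where the essential work lies; the remaining steps are routine matroid bookkeeping.
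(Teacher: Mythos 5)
Your proposal sets up the right frame (maximal rainbow set $R\in\M\cap\N$, one-step exchanges via circuits, the observation that the real issue is making an augmentation \emph{rainbow}), but it stops exactly where the theorem's content begins, and you say so yourself: step (ii) is left as ``a counting/pigeonhole argument \ldots{} should produce'' a good walk, plus an unspecified Hall-type matching and an extremal (lexicographically least) choice of $R$. That is the whole difficulty, and nothing in the proposal indicates how the lexicographic choice would interact with the two exchange relations or why the pigeonhole works; the existence of an ordinary Edmonds augmenting path (which you correctly note follows since $F_j\in\M\cap\N$ has size $n>|R|$) gives no control over which sets the entering elements come from, so it does not by itself bound anything relevant. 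As written, this is a plan with a hole at its center rather than a proof.

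For comparison, the paper closes this gap with three concrete ingredients, none of which appear in your sketch. First, it defines \emph{colorful alternating trails}: sequences $R+a_1-r_1+\cdots$ in which each $a_i$ comes from a \emph{distinct unused} set (so the rainbow property is built in, not repaired afterwards), the partial sets stay in $\M$, and --- crucially --- each step preserves $\spn_\N$, i.e.\ $\spn_\N(R+a_1-r_1+\cdots+a_i-r_i)=\spn_\N(R)$. This span-preservation is what makes the endgame automatic: once some $a$ with $R+a\in\N$ can be attached in $\M$ at the end of a trail, the resulting set is in $\N$ as well, giving a rainbow set of size $|R|+1$. Second, a circuit-preservation lemma (the paper's Lemma on $C_\M(I,y)$ along trails) guarantees that the circuit of a new element computed in $R$ survives to the modified set $R'$, so exchanges can be chained; this is the technical substitute for your ``witnessed arcs.'' Third, the counting is an induction over the unused sets: for each $k\le n-1$, the trails using only $A_1,\ldots,A_k$ reach at least $k$ distinct elements of $R$ (each new set $A_k$, having $n>|R|$ elements, forces either an immediate augmentation or a new reachable element of $R$). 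Since $|R|<n$, after $n-1$ sets all of $R$ is reached, and the $n$-th unused set supplies the augmenting element. No extremal choice of $R$ beyond maximality of $|R|$ is needed. If you want to salvage your outline, the missing idea to import is precisely the $\spn_\N$-preserving trail definition together with the circuit-preservation lemma and the ``one new element of $R$ per new unused set'' induction.
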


Note that Theorem~\ref{thm1:1} is obtained from Theorem~\ref{thm1:3} by taking both $\M$ and $\N$ to be partition matroids, and Theorem~\ref{thm1:2} follows in the case that one of the matroids is a partition matroid.

\section{Proof of Theorem~\ref{thm1:3}}
We shall use the following basic facts on matroids:

\begin{fact}\label{fact1}
If $I\in\M$ and $I+x\not\in\M$, there exists a unique minimal subset of $I$, which we denote by $C_\M(I,x)$, that spans $x$, and for each $a\in C_\M(I,x)$, we have $I+x-a\in\M$ and $\spn_\M(I+x-a)=\spn_\M(I)$.
\end{fact}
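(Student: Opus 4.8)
The plan is to identify the minimal spanning set $C_\M(I,x)$ with the unique circuit of $I+x$ passing through $x$, and then read off all three assertions from this identification.

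First I would record the relevant circuit. Since $I\in\M$ we have $x\notin I$ (otherwise $I+x=I\in\M$), and since $I+x\notin\M$ the set $I+x$ is dependent and hence contains a circuit. Any circuit $C\subseteq I+x$ must contain $x$, since otherwise $C\subseteq I$ would be a dependent subset of the independent set $I$, violating heredity. To see that this circuit is unique, suppose $C_1\neq C_2$ were two circuits in $I+x$; both contain $x$, and the circuit elimination axiom (standard; see Oxley~\cite{Oxley11} or Welsh~\cite{Welsh76}) applied at $x\in C_1\cap C_2$ produces a circuit contained in $(C_1\cup C_2)-x\subseteq I$, again contradicting independence of $I$. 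Write $C$ for this unique circuit and set $C_\M(I,x):=C-x$.

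Next I would characterize the subsets of $I$ that span $x$. For $A\subseteq I$ the set $A$ is independent, so $A$ spans $x$ precisely when $A+x\notin\M$, i.e. when $A+x$ contains a circuit; as that circuit lies in $I+x$ it must equal $C$, whence $A$ spans $x$ if and only if $C\subseteq A+x$, that is $C-x\subseteq A$. Thus $C-x$ spans $x$ and is contained in every subset of $I$ that spans $x$, giving both the minimality and the uniqueness of $C_\M(I,x)$. For the independence of $I+x-a$ with $a\in C_\M(I,x)$, suppose $I+x-a\notin\M$; then $I+x-a=(I-a)+x$ contains a circuit, which must contain $x$ (as $I-a$ is independent) and lie in $I+x$, hence equals $C$; but $a\in C$ while $a\notin I+x-a$, a contradiction, so $I+x-a\in\M$.

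For the span equality I would exploit that $\spn_\M$ is a closure operator, namely monotone, idempotent, and extensive ($A\subseteq\spn_\M(A)$). Since $I+x$ is dependent and $I$ independent, $x\in\spn_\M(I)$, so $I+x-a\subseteq\spn_\M(I)$ and therefore $\spn_\M(I+x-a)\subseteq\spn_\M(I)$. Conversely $(I+x-a)+a=I+x$ is dependent with $I+x-a$ independent, so $a\in\spn_\M(I+x-a)$; hence $I\subseteq I+x\subseteq\spn_\M(I+x-a)$, which gives $\spn_\M(I)\subseteq\spn_\M(I+x-a)$. Combining the two inclusions yields $\spn_\M(I+x-a)=\spn_\M(I)$. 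The main obstacle is establishing the uniqueness of the fundamental circuit $C$, since everything else follows mechanically once $I+x$ is known to contain exactly one circuit; the one point requiring care is the definition of ``spans'' through independent subsets rather than through rank, but because every subset of $I$ is automatically independent this collapses to the circuit-containment statement used above.
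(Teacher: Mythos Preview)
The paper does not actually prove Fact~\ref{fact1}: it is listed among the ``basic facts on matroids'' that the authors take as standard background, with no argument supplied. Your proof is correct and is the standard derivation of the fundamental-circuit property, so there is nothing in the paper to compare it against.
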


Fact~\ref{fact2} is an immediate consequence of the augmentation property:

\begin{fact}\label{fact2}
Let  $I$ and $J$ be independent sets in $\M$. If $|I|<|J|$, then there exists $J_1\subseteq J\setminus I$ such that $|J_1|\ge |J|-|I|$ and $I\cup J_1\in\M$.
\end{fact}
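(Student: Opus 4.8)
The plan is to derive this directly from the augmentation property by greedily enlarging $I$ one element at a time, always drawing the new element from $J$, until the enlarged set has the same size as $J$. Since augmentation guarantees that at each stage we can add an element of $J$ that is not already present while keeping the set independent, after $|J|-|I|$ steps we will have added exactly the required number of elements, all taken from $J\setminus I$.

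Concretely, I would build a chain of independent sets $A_0\subseteq A_1\subseteq\cdots$ with $A_0=I$. Given $A_k\in\M$ with $|A_k|<|J|$, the augmentation property applied to the pair $A_k,J$ yields an element $x_{k+1}\in J\setminus A_k$ with $A_{k+1}:=A_k+x_{k+1}\in\M$; otherwise (when $|A_k|=|J|$) the construction stops. Each $A_k$ is independent by construction, so the hypotheses needed to invoke augmentation ($A_k\in\M$, $J\in\M$, and $|A_k|<|J|$) remain satisfied at every step before termination, and the process cannot stall prematurely.

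Three points then need to be checked. First, every element added lies in $J\setminus I$, not merely in $J\setminus A_k$: since $I=A_0\subseteq A_k$, the choice $x_{k+1}\in J\setminus A_k$ forces $x_{k+1}\notin I$, hence $x_{k+1}\in J\setminus I$. Second, the added elements $x_1,x_2,\ldots$ are pairwise distinct, because $x_{k+1}\notin A_k\supseteq\{x_1,\ldots,x_k\}$. Third, since each step increases the cardinality by one starting from $|I|$, the construction terminates exactly when $k=|J|-|I|$, at which point $|A_k|=|J|$. Setting $J_1:=A_{|J|-|I|}\setminus I=\{x_1,\ldots,x_{|J|-|I|}\}$ gives $J_1\subseteq J\setminus I$, $I\cup J_1=A_{|J|-|I|}\in\M$, and $|J_1|=|J|-|I|$, so in particular $|J_1|\ge|J|-|I|$, as required.

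There is essentially no hard step here: the result is an immediate iteration of the augmentation axiom, and the only thing one must be careful to track is that the newly chosen elements belong to $J\setminus I$ rather than to $J$ alone. In fact the construction produces $J_1$ of size exactly $|J|-|I|$; the inequality in the statement merely leaves room to stop earlier, which we never need.
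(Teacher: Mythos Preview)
Your proof is correct and matches the paper's approach: the paper simply states that Fact~\ref{fact2} is an immediate consequence of the augmentation property, and you have spelled out exactly that iteration.
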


Fact~\ref{fact3} is also known as the \emph{circuit elimination axiom}:

\begin{fact}\label{fact3}
If $C_1$ and $C_2$ are circuits with $e\in C_1\cap C_2$ and $f\in C_1\setminus C_2$ then there exists a circuit $C_3$ such that $f\in C_3\subseteq (C_1\cup C_2)-e$.
\end{fact}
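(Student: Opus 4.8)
The plan is to prove the strong circuit elimination axiom by first establishing that $f\in\spn_\M((C_1\cup C_2)-e-f)$; granting this, the circuit $C_3$ is produced at once from Fact~\ref{fact1}. Throughout I will use only the elementary closure properties of the span operator that follow from its definition together with the augmentation property: extensivity ($A\subseteq\spn_\M(A)$), monotonicity, and transitivity (if $A\subseteq\spn_\M(B)$ then $\spn_\M(A)\subseteq\spn_\M(B)$).

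First I would record two observations. Since $f\in C_1\setminus C_2$ while $e\in C_2$, we have $f\ne e$ and $f\notin C_2$. As $C_2$ is a circuit containing $e$, the set $C_2-e$ is independent and spans $e$; and $f\notin C_2$ ensures $C_2-e\subseteq(C_1\cup C_2)-e-f$. Writing $Z:=(C_1\cup C_2)-e-f$, monotonicity then gives
\[
  e\in\spn_\M(C_2-e)\subseteq\spn_\M(Z).
\]
Symmetrically, $C_1$ is a circuit containing $f$, so $C_1-f$ is independent and $f\in\spn_\M(C_1-f)$.

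The heart of the argument is to transfer the dependency exhibited by $C_1$ into $Z$, from which the shared element $e$ has been removed. Every element of $C_1-f$ other than $e$ already lies in $Z\subseteq\spn_\M(Z)$, and $e\in\spn_\M(Z)$ by the display above; hence $C_1-f\subseteq\spn_\M(Z)$. Transitivity combined with $f\in\spn_\M(C_1-f)$ now yields $f\in\spn_\M(Z)$. I expect this to be the main obstacle, because it is precisely where one must argue through the closure axioms rather than by set inclusions; the governing idea is that the ``missing'' element $e$ is recovered harmlessly inside $\spn_\M(Z)$ because $C_2-e$ still lies in $Z$.

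Finally, since $f\in\spn_\M(Z)$ and $f\notin Z$, the definition of span yields an independent set $I\subseteq Z$ with $I+f\notin\M$. By Fact~\ref{fact1}, $C_\M(I,f)+f$ is a circuit, and setting $C_3:=C_\M(I,f)+f$ gives $f\in C_3\subseteq I+f\subseteq Z+f=(C_1\cup C_2)-e$, as desired. A second route would bypass these span computations in favor of the submodular rank inequality $r(A)+r(B)\ge r(A\cup B)+r(A\cap B)$ applied to $A=C_1$ and $B=C_2-e$ to obtain $r(Z)=r(Z+f)$; I prefer the closure-based version since it stays within the notions already introduced in the excerpt.
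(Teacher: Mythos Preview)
The paper does not actually prove Fact~\ref{fact3}; it is simply stated as the well-known (strong) circuit elimination axiom and then used as a black box in the proof of Lemma~\ref{lem3}. So there is no ``paper's proof'' to compare against.

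Your argument is correct. The closure-based approach works exactly as you describe: $C_2-e\subseteq Z$ gives $e\in\spn_\M(Z)$, whence $C_1-f\subseteq\spn_\M(Z)$ and so $f\in\spn_\M(C_1-f)\subseteq\spn_\M(Z)$; an independent $I\subseteq Z$ with $I+f\notin\M$ then yields the circuit $C_3=C_\M(I,f)+f\subseteq Z+f$. One small remark: Fact~\ref{fact1} as stated does not explicitly say that $C_\M(I,f)+f$ is a circuit, only that $C_\M(I,f)$ is the unique minimal spanning subset; you are implicitly using that any proper subset of $C_\M(I,f)+f$ is independent, which follows immediately from that minimality, but it might be worth one extra sentence to make this step self-contained.
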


We shall also need the following lemma from \cite{AKZ14}. The proof is repeated here in order to make the discussion self-contained.

\begin{lem}\label{lem3}
Let $\M$ be a matroid. Let $I\in\M$ and $X=\set{x_1,\ldots,x_k}\subseteq I$ and $Y=\set{y_1,\ldots,y_k}\subseteq \spn_\M(I)\setminus I$ be such that $\spn_\M\left(\left(I\setminus X\right) \cup Y\right)=\spn_\M(I)$.
Suppose $y_{k+1}\in \spn_\M(I)\setminus I$ and $x_{k+1}$ are such that $x_{k+1}\in C_\M(I,y_{k+1})\setminus X$ and $x_{k+1}\not\in C_\M(I,y_i)$ for all $i=1,\ldots,k$. Then $x_{k+1}\in C_\M(\left(I\setminus X\right) \cup Y,y_{k+1})$.
\end{lem}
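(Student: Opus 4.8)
The plan is to reduce the claim to a single statement about a hyperplane, and then feed the two hypotheses on $x_{k+1}$ into it, one for each role. Write $J=(I\setminus X)\cup Y$. Since $X\subseteq I$ and $Y\cap I=\emptyset$ we have $|J|=|I|$, and by hypothesis $\spn_\M(J)=\spn_\M(I)$, so $J$ is a size-$|I|$ set spanning the same flat as the independent set $I$; I would first record that this forces $J\in\M$ (deferred below). Together with $y_{k+1}\in\spn_\M(I)\setminus I=\spn_\M(J)\setminus J$ (here $y_{k+1}\notin J$, which is implicit in the conclusion making sense), this means $J+y_{k+1}\notin\M$, so $C_\M(J,y_{k+1})$ is well-defined and the statement is meaningful. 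By Fact~\ref{fact1}, $C_\M(J,y_{k+1})$ is the unique \emph{minimal} subset of $J$ that spans $y_{k+1}$. Hence the whole lemma reduces to showing
$$ y_{k+1}\notin\spn_\M(J-x_{k+1}): $$
for if $x_{k+1}$ were absent from $C_\M(J,y_{k+1})$, then $C_\M(J,y_{k+1})\subseteq J-x_{k+1}$ would be a subset of $J-x_{k+1}$ spanning $y_{k+1}$, contradicting the displayed line.

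The engine of the argument is the hyperplane $\spn_\M(I-x_{k+1})$, and the two hypotheses place $y_{k+1}$ outside it while placing all the $y_i$ inside it. \textbf{Step A.} I would show $y_{k+1}\notin\spn_\M(I-x_{k+1})$ using $x_{k+1}\in C_\M(I,y_{k+1})$: by Fact~\ref{fact1} this membership gives $I+y_{k+1}-x_{k+1}\in\M$, i.e.\ the independent set $I-x_{k+1}$ remains independent after adjoining $y_{k+1}$, which is exactly $y_{k+1}\notin\spn_\M(I-x_{k+1})$. \textbf{Step B.} I would show $y_i\in\spn_\M(I-x_{k+1})$ for each $i=1,\dots,k$ using $x_{k+1}\notin C_\M(I,y_i)$: the set $C_\M(I,y_i)$ is a subset of $I$ that spans $y_i$ and, by assumption, avoids $x_{k+1}$, so $C_\M(I,y_i)\subseteq I-x_{k+1}$ and therefore $y_i\in\spn_\M\big(C_\M(I,y_i)\big)\subseteq\spn_\M(I-x_{k+1})$.

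Combining the two steps closes the argument. Since $x_{k+1}\in I\setminus X$ and $x_{k+1}\notin Y$, we have $J-x_{k+1}=(I\setminus(X+x_{k+1}))\cup Y$. Its first piece satisfies $I\setminus(X+x_{k+1})\subseteq I-x_{k+1}\subseteq\spn_\M(I-x_{k+1})$, and its second piece $Y$ lies in $\spn_\M(I-x_{k+1})$ by Step B; so $J-x_{k+1}\subseteq\spn_\M(I-x_{k+1})$, and by monotonicity and idempotence of $\spn_\M$ this yields $\spn_\M(J-x_{k+1})\subseteq\spn_\M(I-x_{k+1})$. As $y_{k+1}$ is outside the right-hand flat by Step A, it is outside $\spn_\M(J-x_{k+1})$, which is precisely the displayed reduction, and the lemma follows.

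The only genuinely delicate point is the preliminary claim that $J\in\M$, which I would establish directly from the axioms rather than by invoking a rank function: take a maximal independent $B\subseteq J$, so that every element of $J\setminus B$ is spanned by $B$ and hence $\spn_\M(B)=\spn_\M(J)=\spn_\M(I)$; were $|B|<|I|$, the augmentation property applied to $B$ and $I$ would produce $x\in I\setminus B$ with $B+x\in\M$, yet $x\in I\subseteq\spn_\M(I)=\spn_\M(B)$ forces $B+x\notin\M$, a contradiction, so $|B|=|I|=|J|$ and $B=J\in\M$. Everything else is bookkeeping of fundamental circuits through Fact~\ref{fact1}; it is worth noting that neither Fact~\ref{fact2} nor the circuit-elimination axiom Fact~\ref{fact3} is needed for this lemma.
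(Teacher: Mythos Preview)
Your proof is correct and takes a genuinely different route from the paper's. The paper argues by contradiction and iterated circuit elimination (Fact~\ref{fact3}): assuming $x_{k+1}\notin C_\M(J,y_{k+1})$, it eliminates $y_{k+1}$ from the two fundamental circuits $C_\M(I,y_{k+1})+y_{k+1}$ and $C_\M(J,y_{k+1})+y_{k+1}$ to obtain a circuit through $x_{k+1}$ contained in $I\cup Y$, then repeatedly eliminates each $y_j$ appearing in it (using $x_{k+1}\notin C_\M(I,y_j)$) until a circuit inside $I$ remains, contradicting independence. Your argument instead works entirely with the closure operator: you identify the flat $\spn_\M(I-x_{k+1})$, observe that the hypothesis $x_{k+1}\in C_\M(I,y_{k+1})$ places $y_{k+1}$ outside it while the hypotheses $x_{k+1}\notin C_\M(I,y_i)$ place every $y_i$ inside it, and conclude that $J-x_{k+1}\subseteq\spn_\M(I-x_{k+1})$ cannot span $y_{k+1}$. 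This is shorter and, as you note, bypasses Fact~\ref{fact3} altogether; it also makes transparent the geometric picture (a separating hyperplane in the restriction to $\spn_\M(I)$) that the circuit-elimination argument obscures. Your explicit verification that $J$ is independent is a point the paper leaves implicit in the statement.
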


\begin{proof}
Suppose, for contradiction, that $x_{k+1}\not\in C_\M(\left(I\setminus X\right) \cup Y,y_{k+1})$.
Let $C_1= C_\M(I,y_{k+1})+y_{k+1}$ and $C_2= C_\M(\left(I\setminus X\right) \cup Y,y_{k+1})+y_{k+1}$. Then, by Fact~\ref{fact3}, there exits a circuit $C^1\subseteq C_1\cup C_2$, such that $x_{k+1}\in C^1$ and $y_{k+1}\not\in C^1$.
Since $I$ is independent, $C^1$ must contain at least one element $y_j\in Y$. Let $C_3=C_\M(I,y_j)+y_j$. Since $x_{k+1}\not\in C_\M(I,y_j)$, there exists a circuit $C^2\subseteq C^1\cup C_3$ such that $x_{k+1}\in C^2$ and $y_{j}\not\in C^2$, by Fact\ref{fact3}. Since $C^2\cap Y\subset C^1\cap Y$ we must have $|C^2\cap Y|<|C^1\cap Y|$. We proceed this way until we obtain a circuit whose intersection with $Y$ is empty. This will contradict the independence of $I$.
\end{proof}

\begin{proof}[Proof of Theorem~\ref{thm1:3}]
Let $A_1,\ldots,A_{2n-1}\in \M\cap \N$, each of size $n$. Let $R\in \M\cap\N$ be a partial rainbow set of maximal size. Assume, for contradiction, that $|R|<n$. Without loss of generality we may assume that $R\cap A_i=\emptyset$ for $i=1,\dots,n$. We define,

\begin{defn}
A \emph{colorful alternating trail} (CAT) of length $k$ ($1\le k\le n-1$) relative to $R$ consists of
a set $\{a_1,\ldots, a_k\}$, where distinct $a_i$'s belong to distinct $A_j$'s ($j\in \{1,\ldots,n-1\}$) and a set $\{r_1,\ldots, r_k\}\subset R$, such that
\begin{enumerate}
  \item [(P$_\M$)]
  $R+a_1-r_1+a_2-r_2+\cdots -r_{i-1}+a_i\in \M$ for all $i=1,\ldots,k$.
  \item [(P$_\N$)]
  $R+a_1-r_1+a_2-r_2+\cdots +a_i-r_i\in \N$ and $\spn_\N(R+a_1-r_1+a_2-r_2+\cdots+a_i-r_i)=\spn_\N(R)$ for all $i=1,\ldots,k$.
\end{enumerate}
If, in addition, $R+a_1-r_1+a_2-r_2+\cdots +a_{k-1}-r_{k-1}+a_k\in \N$ then the CAT is called \emph{augmenting} (in this case the condition (P$_\N$) for $i=k$ is redundant).
\end{defn}

Note that since $R$ is of maximal size, no augmenting CAT relative to $R$ exists. The theorem will be proved by showing that the assumption $|R|<n$ yields a partial rainbow set of size $|R|+1$ in $\M\cap\N$.

\begin{claim}\label{claim:1}
For each $k=1,\ldots, n-1$, the CATs involving only elements of $A_1,\ldots,A_k$ contain at least $k$ distinct elements of $R$.
\end{claim}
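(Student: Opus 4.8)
The plan is to argue by induction on $k$. For $k=1$, any CAT of length $1$ uses one element $a_1\in A_1$ with $R+a_1-r_1\in\M$ (and the $\N$-conditions); since $R+a_1\notin\M$ would follow from maximality being handled separately, the point is that if $R+a_1\in\M$ for some $a_1\in A_1$ then, because $|A_1|=n>|R|$, Fact~\ref{fact2} gives an element of $A_1$ augmenting $R$ in $\M$; then the genuinely interesting case is $R+a_1\notin\M$, where $r_1$ must lie in $C_\M(R,a_1)\subseteq R$, so each length-$1$ CAT uses at least one element of $R$. To get the ``$k$ distinct elements'' statement I would instead phrase the induction hypothesis as: the set of all $r_i$ appearing in CATs using only $A_1,\dots,A_k$ has size at least $k$. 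I would then suppose for contradiction that this set $S_k$ has size $\le k-1$ after passing to step $k+1$, i.e. that every CAT using $A_1,\dots,A_{k+1}$ still only touches the $\le k$ elements already used.

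The core step I would carry out is: take a maximal CAT, say of length $\ell$, using only $A_1,\dots,A_{k+1}$ and touching the set $\{r_1,\dots,r_\ell\}\subseteq R$; I want to show $\ell\ge k+1$ or else derive an augmenting CAT (contradicting maximality of $R$) or a longer CAT (contradicting maximality of $\ell$). Consider the set $I=(R\setminus\{r_1,\dots,r_\ell\})\cup\{a_1,\dots,a_\ell\}$, which by (P$_\M$) lies in $\M$ and, I claim, satisfies $\spn_\M(I)=\spn_\M(R)$ via Fact~\ref{fact1} applied repeatedly. Now look at $A_{k+1}$: since $|A_{k+1}|=n>|R|=|I|$, Fact~\ref{fact2} produces a large independent extension, so in particular there is some $a\in A_{k+1}\setminus I$ with $I+a\in\M$. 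If $a\notin R$ we try to extend the CAT; the subtlety is controlling the $\N$-side, and here is where Lemma~\ref{lem3} enters — it lets me identify, given a new element $a_{\ell+1}$ spanned over $R$, a suitable $r_{\ell+1}\in C_\M(R,a_{\ell+1})$ that is still available (not among $r_1,\dots,r_\ell$), precisely because Lemma~\ref{lem3} transfers circuit-membership from $R$ to the modified independent set $I$.

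The main obstacle I anticipate is the bookkeeping that keeps (P$_\N$) intact while extending on the $\M$-side: one needs $R+a_1-r_1+\dots+a_{\ell+1}-r_{\ell+1}\in\N$ with spanning equality, and the choice of $r_{\ell+1}$ that works for $\M$ (forced to be in $C_\M(R,a_{\ell+1})$ by Fact~\ref{fact1}) must simultaneously be a valid ``exchange'' element for $\N$. I expect to handle this by choosing $a_{\ell+1}$ so that $a_{\ell+1}\in\spn_\N(R)$ (if not, the CAT is already augmenting, contradiction), then using Fact~\ref{fact1} in $\N$ to pick $r_{\ell+1}\in C_\N(R,a_{\ell+1})$ preserving $\spn_\N$, and finally invoking Lemma~\ref{lem3} (in $\M$) to guarantee this same $r_{\ell+1}$ can be taken inside $C_\M(I,a_{\ell+1})$ so that (P$_\M$) survives. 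The counting then closes: each successful extension either enlarges $\{r_1,\dots,r_\ell\}$ (pushing toward $k+1$ distinct elements of $R$) or yields an augmenting CAT, and the latter is impossible; a final case analysis must rule out the degenerate possibility that every available extension reuses an old $r_i$, which would trap us in a cycle — this cycle case is dispatched by a minimal-counterexample / circuit-elimination argument in the spirit of the proof of Lemma~\ref{lem3}.
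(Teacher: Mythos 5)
There is a genuine gap. The heart of the paper's induction step is the \emph{choice} of the new element $a\in A_k$ so that simultaneously $a\not\in\spn_\M(R\setminus R_{k-1})$ and $a\not\in\spn_\N(R_{k-1})$, where $R_{k-1}$ is the set of $R$-elements already reachable by CATs. This is secured by a counting argument: Fact~\ref{fact2} gives at least $k$ elements of $A_k$ outside $\spn_\M(R\setminus R_{k-1})$, and since $A_k\in\N$ and $\spn_\N(R_{k-1})$ has rank $k-1$, at least one of them also avoids $\spn_\N(R_{k-1})$. These two conditions do all the work: the first forces $C_\M(R,a)$ to meet $R_{k-1}$ (so $a$ can be hooked, on the $\M$-side, onto an existing CAT), and the second forces $C_\N(R,a)$ to contain an element $r''\not\in R_{k-1}$, which is the \emph{new} element of $R$ that the claim demands. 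Your proposal never secures such an $a$: you only invoke Fact~\ref{fact2} to get some $a\in A_{k+1}$ with $I+a\in\M$, and you expect Lemma~\ref{lem3} to ``identify a suitable $r_{\ell+1}\in C_\M(R,a_{\ell+1})$ that is still available.'' Lemma~\ref{lem3} cannot do that; it only transfers circuit membership from $R$ to the shifted set, and in the paper it is applied on the $\N$-side (to show $r''\in C_\N(R'-r',a)$ or $r''\in C_\N(R'',a_{i_j})$), not to produce an unused exchange element. You have also interchanged the roles of the two matroids: the new $R$-element comes from the $\N$-circuit of $a$, while the $\M$-circuit is what links $a$ to the old CATs. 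Without the double span-avoidance choice of $a$, the ``degenerate possibility that every available extension reuses an old $r_i$'' — which you defer to an unspecified minimal-counterexample argument — is exactly the case you cannot rule out, so the proof does not close.

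A second, structural problem is your plan to extend a single \emph{maximal} CAT and conclude $\ell\ge k+1$. The claim is about the union of $R$-elements over \emph{all} CATs, not about the existence of one long CAT, and the paper's argument genuinely needs this weaker bookkeeping: in the subcase where $r''\in C_\N(R,a_{i_j})$ for some $j\le l$, the new element is captured by \emph{truncating} the old CAT at the minimal such $j$ (yielding a possibly shorter CAT ending in $r''$), not by extending it. So ``length of a maximal CAT'' is the wrong induction quantity; a maximal CAT may be unextendable even though the union of touched $R$-elements can still be enlarged, and your contradiction scheme would stall precisely there. The correct skeleton is the paper's: pick $a$ with the two span-avoidance properties, split on $a\in\spn_\M(R)$ and $a\in\spn_\N(R)$, use maximality of $R$ to kill the augmenting cases, and otherwise use $C_\M(R,a)\cap R_{k-1}\ne\emptyset$, $C_\N(R,a)\setminus R_{k-1}\ne\emptyset$ together with Lemma~\ref{lem3} (and the observation $C_\M(R',a)=C_\M(R,a)$ when no element of $C_\M(R,a)$ is discarded along the trail) to exhibit a CAT containing a new element of $R$.
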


\begin{proof}[Proof of Claim~\ref{claim:1}]
\renewcommand{\qedsymbol}{}
We prove Claim~\ref{claim:1} by induction on $k$. Since $|R|<n$ and $|A_1|=n$ there exists an element $a_1\in A_1$ such that $R+a_1\in \M$. By the maximality property of $R$ we must have $R+a_1\not\in \N$. By Fact~\ref{fact1}, there exists an element $r_1\in R$ such that $R+a_1-r_1\in \N$ and $\spn_\N(R+a_1-r_1)=\spn_\N(R)$. Thus, $\{a_1\}$ and $\{r_1\}$ form a CAT of length 1 and Claim~\ref{claim:1} holds for $k=1$.

Now suppose Claim~\ref{claim:1} holds for $k-1$ for some $k\ge 2$. Without loss of generality we may assume that the CATs involving only elements of $A_1,\ldots,A_{k-1}$ contain the elements $r_1,\ldots,r_{k-1}\in R$. Let $R_{k-1}=\{r_1,\ldots,r_{k-1}\}$.

Since $|R|<n$, Fact~\ref{fact2} implies that the set $A_k$ contains at least $k$ elements that are not in $\spn_\M(R\setminus R_{k-1})$. Since $|R_{k-1}|=k-1$, at least one of these elements is not in $\spn_\N(R_{k-1})$. Let $a$ be such an element. That is, $a\in A_k$ and
\begin{equation}\label{eq1:1}
    a\not\in\spn_\M(R\setminus R_{k-1})
\end{equation}
and
\begin{equation}\label{eq1:2}
    a\not\in\spn_\N(R_{k-1}).
\end{equation}
First assume $a\not\in\spn_\M(R)$. Then $R+a\in \M$. Since $R$ is of maximal size it follows that $R+a\not\in \N$. By (\ref{eq1:2}), $C_\N(R,a)\not\subseteq R_{k-1}$ and thus, there exists an element $r\in R\setminus R_{k-1}$ such that $R+a-r\in \N$ and $\spn_\N(R+a-r)=\spn_\N(R)$. The CAT consisting of $\{a\}$ and $\{r\}$ contains the extra element $r\not\in R_{k-1}$, and thus Claim~\ref{claim:1} holds for $k$.

Now assume that $a\in\spn_\M(R)$. By (\ref{eq1:1}), there exists $r'\in C_\M(R,a)\cap R_{k-1}$. By the definition of $R_{k-1}$, there exists a CAT containing $r'$, say
\begin{equation}\label{eq1:3}
    \{a_{i_1},\ldots, a_{i_l}\} \Text{ and } \{ r_{i_1},\ldots,r_{i_l}=r'\}.
\end{equation}
We may assume that none of $r_{i_1},\ldots,r_{i_{l-1}}$ is in $C_\M(R,a)$ (otherwise we take $r'$ to be the first of $r_{i_1},\ldots,r_{i_{l-1}}$ that belongs to $C_\M(R,a)$). Let $R'=R+a_{i_1}-r_{i_1}+\cdots-r_{i_{l-1}}+a_{i_l}$. Since $R'\in\M$ and no element of $C_\M(R,a)$ was discarded along the trail from $R$ to $R'$, we have $C_\M(R',a)=C_\M(R,a)$. Hence, $r'\in C_\M(R',a)$. By Fact~\ref{fact1} we have

\begin{equation}\label{eq1:4}
    R'-r'+a\in \M.
\end{equation}

If $a\not\in\spn_\N(R)$ then $R+a\in \N$. By Property (P$_\N$) we have $\spn_\N(R'-r')=\spn_\N(R)$. Thus, $R'-r'+a\in \N$, which, together with (\ref{eq1:4}), yields an augmenting CAT, contrary to the maximality of $|R|$.
Hence, we may assume that $a\in\spn_\N(R)$. By (\ref{eq1:2}), there exists an element $r''\in C_\N(R,a)\setminus R_{k-1}$.

If none of the $a_{i_j}$ ($j=1,\ldots,l$) in (\ref{eq1:3}) satisfies $r''\in C_\N(R,a_{i_j})$, then,  by Lemma~\ref{lem3}, $r''\in C_\N(R'-r',a)$, and thus, the sets $\{a_{i_1}, \ldots, a_{i_l},a\}$ and $\{r_{i_1},\ldots,r',r''\}$ make a CAT which contains the extra element $r''\not\in R_{k-1}$, proving Claim~\ref{claim:1} for $k$.

Otherwise, let $j$ be minimal such that $r''\in C_\N(R,a_{i_j})$ and let $R''=R+a_{i_1}-r_{i_1}+\cdots+a_{i_{j-1}}-r_{i_{j-1}}$. Then, by Lemma~\ref{lem3}, it follows that $r''\in C_\N(R'',a_{i_j})$. Hence, $R''+a_{i_j}-r''\in\N$ and $\spn_\N(R''+a_{i_j}-r'')=\spn_\N(R'')=\spn_\N(R)$. We obtain a CAT consisting of $\{a_{i_1}, \ldots, a_{i_{j-1}}, a_{i_j}\}$ and $\{r_{i_1},\ldots,r_{i_{j-1}},r''\}$. This CAT involves the extra element $r''\not\in R_{k-1}$. Thus, the CATs involving only elements of $A_1,\ldots,A_{k-1},A_k$ contain at least $k$ elements of $R$, proving Claim~\ref{claim:1} for $k$. This completes the proof of Claim~\ref{claim:1}.
\end{proof}

To conclude the proof of Theorem~\ref{thm1:3}, note that by Claim~\ref{claim:1}, the CATs involving only elements of $A_1,\ldots, A_{n-1}$ contain all the elements of $R$. Since $|R|<n$ there exists an element $a_n\in A_n$ such that $R+a_n\in \N$. By the maximality property of $R$, we must have that $a_n\in \spn_\M(R)$. Let $r\in R$ satisfy $R+a_n-r\in \M$.
Applying Claim~\ref{claim:1} to the case $k=n-1$, it follows that $r$ belongs to a CAT consisting of  some sets $\{a_{i_1}, \ldots, a_{i_l}\}$ and $\{r_{i_1},\ldots,r_{i_{l-1}},r_{i_l}=r\}$, where each of $a_{i_1}, \ldots, a_{i_l}$ belongs to a different set among $A_1,\ldots,A_{n-1}$.
Let $R'=R+a_{i_1}-r_{i_1}+\cdots-r_{i_{l-1}}+a_{i_l}$. We may assume that $r$ is the first element in this trail satisfying $r\in C_\M(R,a_n)$ (otherwise we take $r$ to be the first element in the trail belonging to $C_\M(R,a_n)$) and thus, $C_\M(R',a_n)=C_\M(R,a_n)$.
Hence, $R'-r+a_n\in \M$. Since $\spn_\N(R'-r)=\spn_\N(R)$, by Property ($P_\N$), we also have $R'-r+a_n\in \N$. Thus, $R'-r+a_n$ is a rainbow matching of size $|R|+1$.
\end{proof}
\providecommand{\bysame}{\leavevmode\hbox to3em{\hrulefill}\thinspace}
\providecommand{\MR}{\relax\ifhmode\unskip\space\fi MR }
\providecommand{\MRhref}[2]{%
  \href{http://www.ams.org/mathscinet-getitem?mr=#1}{#2}
}
\providecommand{\href}[2]{#2}


\begin{thebibliography}{1}

\bibitem{AKZ14}
R.~Aharoni, D.~Kotlar, and R.~Ziv, \emph{Rainbow sets in the intersection of
  two matroids}, a manuscript \textbf{arXiv:1405.3119 [math.CO]}.

\bibitem{Chappell99}
G.~G. Chappell, \emph{A matroid generalization of a result on row-latin
  rectangles}, Journal of Combinatorial Theory, Series A \textbf{88} (1999),
  no.~2, 235--245.

\bibitem{Drisko98}
A.~A. Drisko, \emph{Transversals in row-{L}atin rectangles}, Journal of
  Combinatorial Theory, Series A \textbf{84} (1998), 181--195.

\bibitem{Oxley11}
J.~Oxley, \emph{Matroid theory}, 2 ed., Oxford University Press, 2011.

\bibitem{Welsh76}
D.~Welsh, \emph{Matroid theory}, Academic Press, London, 1976.

\end{thebibliography}
\end{document}